\newtheorem{theorem}{Theorem}[section]
\newtheorem{proposition}{Proposition}[section]
\newcommand{\nat}{\mathbb{N}}
\newcommand{\zz}{\mathbb{Z}}
\newcommand{\stin}{\subseteq}
\newcommand{\mb}[1]{\mathbf{#1}}
\newcommand{\sett}[1]{\left\{ #1 \right\}}
\newcommand{\set}[1]{\left\{ #1 \right\}}
\newcommand{\paren}[1]{\left( #1 \right) }
\newcommand{\tb}[1]{\textbf{#1}}
\newcommand{\ti}[1]{\textit{#1}}
\newcommand{\R}{\mathbb{R}}
\newcommand{\Q}{\mathbb{Q}}
\newcommand{\N}{\mathbb{N}}
\newcommand{\Z}{\mathbb{Z}}
\newcommand{\frob}[1]{\textnormal{Frob}\left( #1 \right)}
\newcommand{\cond}[1]{\chi\left(#1 \right)}
\begin{document}

\title{\tb{Frobenius templates in certain $\mathbf{2 \times 2}$ matrix rings$\mathbf{^*}$}\vspace{-4ex}}
\author{}
\date{}

\maketitle

\begin{center} \large 
\textbf{\scalebox{1.0909}{Timothy Eller}} \vspace{2mm} \\ 

Georgia Southern University, te02816@georgiasouthern.edu \vspace{4mm}\\

\textbf{\scalebox{1.0909}{Jakub Kraus}} \vspace{2mm}\\

University of Michigan, jakraus@umich.edu \vspace{4mm}

\textbf{\scalebox{1.0909}{Yuki Takahashi}} \vspace{2mm}\\

Grinnell College, takahash@grinnell.edu \vspace{4mm}

\textbf{\scalebox{1.0909}{Zhichun (Joy) Zhang}} \vspace{2mm}\\

Swarthmore College, zzhang3@swarthmore.edu \vspace{4mm}\end{center}

\thispagestyle{titlefooter}

\begin{abstract}
The classical Frobenius problem is to find the largest integer that cannot be written as a linear combination of a given set of positive, coprime integers using nonnegative integer coefficients. Prior work has generalized the classical Frobenius problem from integers to Frobenius problems in other rings. This paper explores Frobenius problems in various rings of (upper) triangular $2 \times 2$ matrices with constant diagonal.
\end{abstract}

\tb{Key words and phrases}: Frobenius template, Frobenius problem, coin problem, monoid, ring.

\tb{AMS Subject Classification}: 11D07, 11B05.

\section{Introduction}
Let $\N$ be the set of nonnegative integers, $\Z$ the set of integers, $\Z^+ \vcentcolon = \N \setminus \set{0}$,
and, for $\alpha_1, \dots, \alpha_n \in \N$, $MN(\alpha_1, \dots, \alpha_n) \vcentcolon= \{\sum_{i=1}^n \lambda_i \alpha_i : \lambda_1, \dots ,\lambda_n \in \mathbb N\}$. 
A \textit{list} in a set $S$ is a finite sequence of elements from $S$. If $n \in \Z^+$ and $(\alpha_1, \dots, \alpha_n)$ is a list in $\N$,
we will say $\alpha_1, \dots, \alpha_n$ are \ti{coprime} to mean $\gcd (\alpha_1, \dots, \alpha_n) = 1$. If we assume $\gcd(\alpha_1, \dots, \alpha_n) \neq 1$ when $\alpha_1 = \dots = \alpha_n = 0$,
then a list in $\N$ is coprime if and only if the list contains positive integers and the positive integers in the list are coprime. For an additive group $G$, if $S \subseteq G$ and $g \in G$, then let $g + S \vcentcolon= \set{g + s : s \in S}$.

Here is a 19th century theorem, due to Sylvester and Frobenius and others, restated to suit our purpose:
\begin{theorem}
   If $\alpha_1, \dots, \alpha_n\in \N$ are coprime, then for some $w\in \mathbb N$, $w+\mathbb N\subseteq MN(\alpha_1, \dots, \alpha_n)$.
\end{theorem}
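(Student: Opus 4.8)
The plan is to reduce to the case where every $\alpha_i$ is positive, use Bézout's identity to produce two consecutive positive integers lying in the monoid $MN(\alpha_1, \dots, \alpha_n)$, and then observe that a submonoid of $(\N,+)$ containing two consecutive integers is cofinite. First I would note that deleting the entries equal to $0$ changes neither $MN(\alpha_1, \dots, \alpha_n)$ (a zero summand contributes nothing) nor, by the convention recorded just above the theorem, the coprimality hypothesis, which in particular forces at least one entry to be positive; so I may assume $n \geq 1$ and $\alpha_i \in \Z^+$ for all $i$. Write $S \vcentcolon= MN(\alpha_1, \dots, \alpha_n)$, a submonoid of $(\N,+)$.

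Next, since $\gcd(\alpha_1, \dots, \alpha_n) = 1$, Bézout's identity gives $\lambda_1, \dots, \lambda_n \in \Z$ with $\sum_{i=1}^n \lambda_i \alpha_i = 1$. Setting $P \vcentcolon= \sum_{i : \lambda_i > 0} \lambda_i \alpha_i$ and $Q \vcentcolon= \sum_{i : \lambda_i < 0} (-\lambda_i)\alpha_i$, each of $P$ and $Q$ is a nonnegative integer combination of the $\alpha_i$, so $P, Q \in S$, while $P - Q = 1$; hence $Q$ and $Q+1$ both lie in $S$. (If $Q = 0$ then $1 \in S$, so $S = \N$ and $w = 0$ works; assume $Q \geq 1$ from now on.)

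Finally I would extract cofiniteness from these two consecutive elements. For all $a, b \in \N$ we have $aQ + b(Q+1) = (a+b)Q + b \in S$, so fixing $m \vcentcolon= a + b$ and letting $b$ run through $0, 1, \dots, m$ shows $\{mQ, mQ+1, \dots, mQ+m\} \subseteq S$ for every $m \in \N$. Once $m \geq Q - 1$ we have $(m+1)Q \leq mQ + m + 1$, so consecutive blocks of this form overlap or abut, and therefore every integer at least $(Q-1)Q$ lies in $S$; taking $w \vcentcolon= Q^2$ gives $w + \N \subseteq S$.

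The one step that is not routine bookkeeping is the Bézout step: turning the integer coefficients it supplies into the nonnegative coefficients that $MN$ demands. Splitting the Bézout combination into its positive and negative parts is precisely what accomplishes this, and I expect it to be the conceptual crux; the final covering argument just requires a little care to pin down where the arithmetic-progression blocks begin to overlap. An alternative I could pursue is induction on $n$: with $d \vcentcolon= \gcd(\alpha_1, \dots, \alpha_{n-1})$, the inductive hypothesis applied to $\alpha_1/d, \dots, \alpha_{n-1}/d$ puts every sufficiently large multiple of $d$ in $S$, and since $\gcd(d, \alpha_n) = 1$ the numbers $0, \alpha_n, \dots, (d-1)\alpha_n$ hit every residue class modulo $d$, so subtracting a suitable bounded multiple of $\alpha_n$ turns any large integer into a large multiple of $d$.
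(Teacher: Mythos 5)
The paper records Theorem 1.1 as a known nineteenth-century result of Sylvester, Frobenius, and others and gives no proof of it, so there is no in-text argument to compare against. Your proof is correct and is essentially the standard one: after discarding zero entries (which changes neither $MN(\alpha_1,\dots,\alpha_n)$ nor, under the paper's convention, the coprimality hypothesis), B\'ezout's identity split into positive and negative parts produces $P, Q \in MN(\alpha_1,\dots,\alpha_n)$ with $P - Q = 1$; the degenerate case $Q = 0$ gives $1 \in MN$ and hence $w = 0$; and for $Q \geq 1$ the blocks $\{mQ, mQ+1, \dots, mQ+m\} \subseteq MN(\alpha_1,\dots,\alpha_n)$ abut or overlap once $m \geq Q - 1$, covering every integer from $Q^2 - Q$ on. Each step is justified, including the subtle point that the two consecutive elements must themselves be exhibited as \emph{nonnegative} combinations, which the positive/negative split of the B\'ezout coefficients handles cleanly. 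As a minor remark, your covering argument actually yields the slightly sharper $w = Q^2 - Q$; taking $w = Q^2$ is a harmless overestimate. Your alternative sketch by induction on $n$ via $d = \gcd(\alpha_1,\dots,\alpha_{n-1})$ is also a valid route, trading the explicit B\'ezout construction for a residue-class argument modulo $d$.
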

\noindent Note that because $0\in \mathbb N$, if $w+\mathbb N\subseteq MN(\alpha_1, \dots, \alpha_n)$ then \\$w\in MN(\alpha_1, \dots, \alpha_n)$.
Note also that because $\mathbb N$ is closed under addition, if $w+\mathbb N\subseteq MN(\alpha_1, \dots, \alpha_n)$ and $b\in \mathbb N$, then $w+b+\mathbb N\subseteq w+\mathbb N\subseteq MN(\alpha_1, \dots, \alpha_n)$.
Thus, if we define, for $\alpha_1, \dots, \alpha_n\in \mathbb Z^+$, $\frob{\alpha_1, \dots, \alpha_n} \vcentcolon= \{w\in \mathbb N:w+\mathbb N\subseteq MN(\alpha_1, \dots, \alpha_n)\}$,
we have that either $\frob{\alpha_1, \dots, \alpha_n}=\emptyset$ or \\$\frob{\alpha_1, \dots, \alpha_n}=\chi (\alpha_1, \dots, \alpha_n)+\mathbb N$, where $\chi(\alpha_1, \dots, \alpha_n)$ is the least element of $\frob{\alpha_1, \dots, \alpha_n}$.

The classical Frobenius problem \cite{brauer}, sometimes called the ``coin problem,'' is to evaluate $\chi(\alpha_1, \dots, \alpha_n)$ at coprime lists $(\alpha_1, \dots, \alpha_n)$ in $\N$.
Frobenius used to discuss this problem in his lectures \cite{alfonsin2005diophantine}, so this area of number theory happens to be named after him.
For $n=2$, there is a formula: if $\alpha, \beta \in \N$ and $\gcd (\alpha, \beta)=1$, then $\chi(\alpha, \beta)=(\alpha-1)(\beta-1)$.
For $n>2$ there are no known general formulas, although there are formulas for special cases and there are algorithms; see \cite{trimm}. The classical Frobenius problem inspired the results in \cite{gaussian}, which in turn inspired Nicole Looper \cite{looper} to generalize the classical Frobenius problem by introducing the notion of a Frobenius template (defined below), allowing one to pose similar problems in rings other than $\Z$.

All of our rings will contain a multiplicative identity, 1, and most will be commutative.
An \textit{additive monoid} in a ring $R$, is a subset of $R$ that is closed under addition and contains the identity $0$; monoids, unlike groups, do not have to contain inverses. A \textit{Frobenius template} in a ring $R$ is a triple $(A, C, U)$ such that: \begin{enumerate}[label=(\roman*)]
    \item $A$ is a nonempty subset of $R$,
    \item $C$ and $U$ are additive monoids in $R$, and
    \item for all lists $(\alpha_1, \dots, \alpha_n)$ in $A$, \\
    $MN(\alpha_1, \dots ,\alpha_n)=\{\sum_{i=1}^n \lambda_i\alpha_i:\lambda_1, \dots ,\lambda_n\in C\}$ is a subset of $U$.
\end{enumerate} For any valid template, the properties above guarantee that $MN(\alpha_1, \dots ,\alpha_n)$ will also be an additive monoid in $R$.
The \textit{Frobenius set} of a list $\alpha_1, \dots ,\alpha_n\in A$ is 
$\frob{\alpha_1, \dots, \alpha_n} \vcentcolon= \set{w\in R:w+U\subseteq MN(\alpha_1, \dots ,\alpha_n)}$. Notice that $w \in \frob{\alpha_1, \dots, \alpha_n}$ implies that $w \in MN(\alpha_1, \dots, \alpha_n)$ since $0 \in U$, so $\frob{\alpha_1, \dots, \alpha_n} \subseteq MN(\alpha_1, \dots, \alpha_n) \subseteq U$. Notice also that the assumption that $U$ is closed under addition guarantees that if $w \in \frob{\alpha_1, \dots, \alpha_n}$, then $w + U \subseteq \frob{\alpha_1, \dots, \alpha_n}$.

For a given template $(A, C, U)$, the corresponding \textit{Frobenius problem} is the following pair of tasks:
\begin{enumerate}
    \item Determine for which lists $\alpha_1, \dots ,\alpha_n\in A$ it is true that $\frob{\alpha_1, \dots, \alpha_n}\neq \emptyset$.
    \item For lists $\alpha_1, \dots ,\alpha_n$ such that $\frob{\alpha_1, \dots, \alpha_n}\neq \emptyset$, describe the set $\frob{\alpha_1, \dots, \alpha_n}$.
\end{enumerate}
Note that in all the Frobenius templates that have been studied so far, it has always been the case that $\frob{\alpha_1, \dots, \alpha_n}$, when nonempty, is a finite union of sets of the form $w + U$ for some $w \in R$.

The classical Frobenius problem revolves around the template $(\N, \N, \N)$ in the ring $\Z$. As discussed earlier, if $n = 2$ and $\alpha_1, \alpha_2 \in \N$, then $\frob{\alpha_1, \alpha_2}$ is nonempty if (and only if) $\alpha_1, \alpha_2$ are coprime, and $\frob{\alpha_1, \alpha_2} =\chi(\alpha_1, \alpha_2)+ \N = (\alpha_1 - 1)(\alpha_2 - 1) + \N$ in such cases. So the classical Frobenius problem has been completely solved when $n = 2$.

Our definition above of a Frobenius template is slightly less general than Looper's definition in \cite{looper}. The difference is that in Looper's templates, $U=U(\alpha_1, \dots ,\alpha_n)$ is allowed to vary with the list $\alpha_1, \dots ,\alpha_n$.
This is absolutely necessary in order to get interesting and nontrivial results when the ring involved is a subring of $\mathbb C$, the complex numbers, that is not contained in $\mathbb R$, the real numbers.
The ring of Gaussian integers, $\mathbb Z[i]=\{a+bi:a, b\in \mathbb Z\}$, is such a ring, a particularly famous member of the family of rings $\{\mathbb Z[\sqrt{m}i] : m\in \mathbb Z^+ \}$.

To see why we must let the element $U$ of a Frobenius template in such a ring depend on the list $\alpha_1, \dots ,\alpha_n\in A$, here is an example in $\mathbb Z[\sqrt{3}i]$.
Suppose $C=\{a+b\sqrt{3}i:a, b\in \mathbb N\} = \set{re^{i\theta} \in \Z[\sqrt{3}i] : 0 \leq r \text{ and } 0 \leq \theta \leq \frac{\pi}{2}}$ and $A=C \setminus \{0\}$.
Then consider $\alpha= 2e^{i \frac{\pi}{3}} = 1+\sqrt{3}i \in A$ and $\beta = 1 \in A$.
What should $U$ be for this list?
We set $U(\alpha, \beta)=\{re^{i\theta}\in \mathbb Z[\sqrt{3}i] : 0 \leq r \text{ and } 0\leq \theta \leq \frac{5\pi}{6}\}$, an angular sector in $\mathbb Z[\sqrt{3}i]$.
We so choose $U(\alpha, \beta)$ because $MN(\alpha, \beta)=\set{\lambda_1 (2e^{i \frac{\pi}{3}}) +\lambda_2 : \lambda_1, \lambda_2 \in C} \subseteq U(\alpha, \beta)$,
and no angular sector in $\mathbb Z[\sqrt{3}i]$ properly contained in $U(\alpha,\beta)$ contains $MN(\alpha, \beta)$.
Further, if $\hat{U}$ is an additive monoid in $\mathbb Z[\sqrt{3}i]$ which properly contains $U(\alpha, \beta)$, then no translate $w+\hat{U}$, $w\in \mathbb C$, is contained in $MN(\alpha, \beta)$. Thus $U(\alpha, \beta)$ is the only possible additive monoid in $\mathbb Z[\sqrt{3}i]$ containing $MN(\alpha, \beta)$ such that $\frob{\alpha, \beta} = \{w:w+U(\alpha, \beta)\subseteq MN(\alpha, \beta)\}$ might possibly be nonempty.

We leave to the reader the verification of the claims in the previous paragraph.
We hope that the main point is clear: 
letting $U$ vary with the list $\alpha_1, \dots ,\alpha_n\in A\subseteq \mathbb Z[\sqrt{m}i]$ in the formulation of Frobenius problems in $\mathbb Z[\sqrt{m}i]$ is necessitated by the nature of multiplication in the complex numbers.
For an appreciation of Frobenius problems in such settings, see \cite{gaussian} and \cite{splitprimes}.

We do not take on similar difficulties here.
In the next section we give some more or less obvious results in various Frobenius templates, then review previous results concerning templates in the rings $\mathbb Z[\sqrt{m}]$ with $m\in \mathbb Z^+ \setminus  \{n^2:n\in \mathbb Z^+\}$. In the third section, we classify the Frobenius set in a pleasing modification of the classical Frobenius template. In the fourth section, we solve Frobenius problems in rings of $2 \times 2$ (upper) triangular matrices with constant diagonal and entries from a ring $Q$, where different choices of $Q$ allow different templates. In the last section, we further generalize the idea of a Frobenius template and explore an example of this generalization.

\section{Some fundamentals and known results}

Throughout this section, $R$ will be a ring with multiplicative identity 1. A list $(\alpha_1, \dots ,\alpha_n)$ in $R$ \textit{spans unity} in $R$ if and only if $1=\lambda_1\alpha_1+...+\lambda_n\alpha_n$ for some $\lambda_1, \dots ,\lambda_n\in R$.

\begin{proposition}
Let $(A, C, U)$ be a Frobenius template in $R$ such that $1 \in U$.
If $(\alpha_1, \dots ,\alpha_n)$ is a list in $A$ such that $\frob{\alpha_1, \dots, \alpha_n}\neq \emptyset$, then $(\alpha_1, \dots ,\alpha_n)$ spans unity in $R$.
\end{proposition}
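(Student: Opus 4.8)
The idea is to unpack the definition of $\frob{\alpha_1, \dots, \alpha_n} \neq \emptyset$ and exhibit an explicit expression for $1$ as an $R$-linear combination of the $\alpha_i$'s. Suppose $w \in \frob{\alpha_1, \dots, \alpha_n}$. Then by definition $w + U \subseteq MN(\alpha_1, \dots, \alpha_n)$. Since $1 \in U$ by hypothesis, we get $w + 1 \in MN(\alpha_1, \dots, \alpha_n)$; also, since $0 \in U$, we get $w \in MN(\alpha_1, \dots, \alpha_n)$.

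The second step is to write out what membership in $MN(\alpha_1, \dots, \alpha_n)$ means: there are coefficients $\lambda_1, \dots, \lambda_n \in C$ with $w = \sum_{i=1}^n \lambda_i \alpha_i$, and coefficients $\mu_1, \dots, \mu_n \in C$ with $w + 1 = \sum_{i=1}^n \mu_i \alpha_i$. Subtracting, $1 = \sum_{i=1}^n (\mu_i - \lambda_i)\alpha_i$. Since $C \subseteq R$ and $R$ is a ring, each $\mu_i - \lambda_i \in R$, so this is precisely a witness that $(\alpha_1, \dots, \alpha_n)$ spans unity in $R$. (Note we do not need $C$ to be closed under subtraction; we only need the differences to lie in $R$, which is automatic.)

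There is essentially no obstacle here — the only subtlety worth a sentence is that the lists realizing $w$ and $w+1$ as elements of $MN(\alpha_1, \dots, \alpha_n)$ use the same list $(\alpha_1, \dots, \alpha_n)$, so the subtraction genuinely produces a combination of those same $\alpha_i$'s rather than of two unrelated lists; this is immediate from the way $MN$ is defined for a fixed list. One should also remark that the hypothesis $1 \in U$ is exactly what makes the argument go, since it is what lets us step from $w$ to $w+1$ while staying inside $MN(\alpha_1, \dots, \alpha_n)$.
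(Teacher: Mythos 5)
Your proof is correct and follows exactly the same approach as the paper's: take $w$ in the Frobenius set, use $0, 1 \in U$ to get both $w$ and $w+1$ in $MN(\alpha_1, \dots, \alpha_n)$, and subtract the two representations. The extra remark that the differences need only lie in $R$ (not in $C$) is a nice clarification but does not change the argument.
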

\begin{proof}
Let $w \in \frob{\alpha_1, \dots, \alpha_n}$, i.e. $w\in R$ and $w+U \subseteq MN(\alpha_1, \dots ,\alpha_n)$.
Since $0 \in U$ and $1 \in U$, it follows that $w, w+1 \in MN(\alpha_1, \dots ,\alpha_n)$.
Therefore, for some $\lambda_1, \dots ,\lambda,\gamma_1, \dots ,\gamma_n\in C$, 
$w=\sum_{i=1}^n \lambda_i \alpha_i$ and
$w+1=\sum_{i=1}^n \gamma_i\alpha_i$, so $1=\sum_{i=1}^n (\gamma_i - \lambda_i) \alpha_i$.
\end{proof}

As a warm-up, here are some valid Frobenius templates $(A, C, U)$ with, in most cases, trivial solutions to the corresponding Frobenius problems. In each example, $(\alpha_1, \dots, \alpha_n)$ denotes a list in $A$. Check that $A$ is nonempty, $C$ and $U$ are additive monoids in $R$, and $U$ always contains $MN(\alpha_1, \dots, \alpha_n)$.
\begin{enumerate}
    \item In the template $(R, \set{0}, \set{0})$, we have $MN(\alpha_1, \dots, \alpha_n) = \set{0}$ and\\ $\frob{\alpha_1, \dots, \alpha_n} = \set{0}$. This result extends to any template $(A, C, U)$ with $C = U = \set{0}$.
    
    \item  Similar to example 1, templates of the form $(\set{0}, C, \set{0})$ always have $MN(0) = \set{0}$ and $\frob{0} = \set{0}$.
    
    \item In the template $(R, R, R)$, Proposition 2.1 shows that $(\alpha_1, \dots, \alpha_n)$ spans unity in $R$ if $\frob{\alpha_1, \dots, \alpha_n} \neq \emptyset$. Conversely, if $(\alpha_1, \dots, \alpha_n)$ spans unity, then $\frob{\alpha_1, \dots, \alpha_n} = R$.
    
    \item Suppose that $I$ is an ideal of $R$ and the template is $(\{1\}, I, I)$.
    Clearly $MN(1) = I = U$, so $U \subseteq \frob{1}$. Recall that $\frob{\alpha_1, \dots, \alpha_n}$ is a subset of $MN(\alpha_1, \dots, \alpha_n)$; thus $\frob{1} = U$.
    
    \item Again, let $I$ be an ideal of $R$, and now consider the template $(I, R, I)$.
    When $I = R$, this is example 3.
    In contrast, let $I$ be a proper ideal. Then no list $(\alpha_1, \dots, \alpha_n)$ spans unity in $R$ --- but this is precisely because $1\notin I$, so no facile conclusion based on Proposition 2.1 presents itself.
    Frobenius problems for this class of templates could be interesting.
    For instance, if the template is $(2 \Z, \Z, 2 \Z)$ in $\mathbb Z$, where $2\mathbb Z$ is the ideal of even integers, it is straightforward to prove that $\frob{\alpha_1, \dots, \alpha_n}\neq \emptyset$ if and only if the integers $\abs{\frac{\alpha_1}{2}}, \dots , \abs{\frac{\alpha_n}{2}}$ are coprime.
    In this case, $MN(\alpha_1, \dots ,\alpha_n)=2\mathbb Z$ and $\frob{\alpha_1, \dots, \alpha_n} = 2 \text{Frob}'\paren{\frac {\alpha_1}{2}, \dots, \frac {\alpha_n}{2}}$, where $\text{Frob}'$ denotes Frobenius sets with respect to the classical template $(\N, \N, \N)$.
    
    \item If the template is $((0, \infty), [0, \infty), [0, \infty))$ in $\R$, then $MN(\alpha_1, \dots, \alpha_n) = [0, \infty) = \frob{\alpha_1, \dots, \alpha_n}$ for every list $(\alpha_1, \dots, \alpha_n)$. On the other hand, if we restrict the coefficients to the set of rational numbers $\Q$ with the template $((0, \infty), \Q \cap [0, \infty), [0, \infty))$ in $\R$, then $\frob{\alpha_1, \dots, \alpha_n}$ is always empty, as $MN(\alpha_1, \dots, \alpha_n)$ is countable and any translate of $[0, \infty)$ is uncountable.
    
    \item Suppose that $R_1$ and $R_2$ are rings with associated Frobenius templates $(A_1, C_1, U_1)$ and $(A_2, C_2, U_2)$,  respectively.
    Let $A_1 \times A_2$ be the Cartesian product of $A_1$ and $A_2$. The same goes for $C_1 \times C_2$ and $U_1 \times U_2$, 
    which are additive monoids (under componentwise addition) in the product ring $R_1 \times R_2$. It is simple to see that $(A_1 \times A_2, C_1 \times C_2, U_1 \times U_2)$ is a valid Frobenius template in $R_1 \times R_2$. Let $(\beta_1, \dots ,\beta_n)$ be a list in $A_1 \times A_2$ with each $\beta_i=(\gamma_i, \mu_i)$ for $i=1, \dots, n$. 
    If $\frob{\beta_1, \dots, \beta_n} \neq \emptyset$ in $(A_1 \times A_2, C_1 \times C_2, U_1 \times U_2)$, then\\ $\frob{\beta_1, \dots ,\beta_n} = \textnormal{Frob}_1(\gamma_1, \dots, \gamma_n) \times \textnormal{Frob}_2(\mu, \dots, \mu_2)$, where $\textnormal{Frob}_1$ and $\textnormal{Frob}_2$ are Frobenius sets in $(A_1, C_1, U_1)$ and $(A_2, C_2, U_2)$, respectively. For $m \in \Z^+$, this result extends to the product ring $R_1 \times \dots \times R_m$.
\end{enumerate}

Besides the Gaussian integers \cite{gaussian, splitprimes}, prior work on generalized Frobenius problems has concentrated on templates in the real subring $\mathbb Z[\sqrt{m}] \vcentcolon= \{a+b\sqrt{m}:a, b\in \mathbb Z\}$, where $m \in \Z^+$ is not a perfect square.
Below, we showcase some highlights from this work.
If $m$ is a positive integer with irrational square root, then $\Z[\sqrt{m}]$ is a subring of $\R$.
Let $\N[\sqrt{m}] \vcentcolon=\{a+b\sqrt{m} : a, b\in \N\}$ and $\Z[\sqrt{m}]^+ \vcentcolon=\Z[\sqrt{m}]\cap [0, \infty)$, which are both additive monoids in $\Z[\sqrt{m}]$ that contain $1$.

\begin{enumerate}
    \item In the template $(\mathbb N[\sqrt{m}], \mathbb N[\sqrt{m}], \mathbb N[\sqrt{m}])$, the result below is proven in \cite{looper}:
    
    {\theorem If $\alpha_1, \dots ,\alpha_n \in \mathbb N[\sqrt{m}]$, $\alpha_i=a_i+b_i\sqrt{m}$, $a_i, b_i\in \mathbb N$, $i=1, \dots ,n$, 
    then $\frob{\alpha_1, \dots, \alpha_n}\neq \emptyset$ if and only if $(\alpha_1, \dots, \alpha_n)$ spans unity in $\mathbb Z[\sqrt{m}]$ and at least one of $a_1, \dots ,a_n$, $b_1, \dots ,b_n$ is zero.}
    
    The result stated in \cite{looper}, Theorem 3, is slightly weaker than the formulation above, but needlessly so, since the proof in \cite{looper} proves the statement given here.

    \item In the template $(\mathbb Z[\sqrt{m}]^+, \mathbb Z[\sqrt{m}]^+, \mathbb Z[\sqrt{m}]^+)$, the following is nearly proven in \cite{beneish}: 
    
    {\theorem
    If $\alpha_1, \dots ,\alpha_n\in \mathbb Z[\sqrt{m}]^+$, then the following are equivalent:
    \begin{enumerate}[label=(\roman*)]
        \item $\frob{\alpha_1, \dots, \alpha_n}\neq \emptyset$;
        \item $\frob{\alpha_1, \dots, \alpha_n}=\mathbb Z[\sqrt{m}]^+$;
        \item $(\alpha_1, \dots ,\alpha_n)$ spans unity in $\mathbb Z[\sqrt{m}]$.
    \end{enumerate}
    }
    Since Theorem 2 of \cite{beneish} proves $(iii) \implies (ii)$, we simply appeal to Proposition 2.1 to prove the statement above, which is somewhat stronger than Theorem 2 in \cite{beneish}.
    
    Also in \cite{beneish}, with reference to the template $(\mathbb N[\sqrt{m}], \mathbb N[\sqrt{m}], \mathbb N[\sqrt{m}])$,\\ $\frob{\alpha_1, \dots, \alpha_n}$ is determined in a very special class of cases. Recall that for coprime positive integers $c_1, \dots, c_n$, $\cond{c_1, \dots, c_n}$ is the smallest $w \in \Z^+$ such that $w + \N \subseteq \set{\sum_{i=1}^n \lambda_i c_i : \lambda_1, \dots ,\lambda_n\in \N }$.
    
    {\theorem
    For $a_1, \dots ,a_r, b_1, \dots ,b_s\in \N$, \\ $(a_1, \dots ,a_r, b_1\sqrt{m}, \dots ,b_s\sqrt{m})$ is a list in $\N[\sqrt{m}]$.
    We have
    \[
    \begin{split}
       &\frob{a_1, \dots ,a_r, b_1\sqrt{m}, \dots ,b_s\sqrt{m}} \neq \emptyset\\ &\iff a_1, \dots ,a_r, b_1m, \dots ,b_sm \text{ are coprime,} 
    \end{split}
    \] 
    and, in such cases,
    \[
    \begin{split}
    &\frob{a_1, \dots ,a_r, b_1\sqrt{m}, \dots ,b_s\sqrt{m}}\\
        &= \chi(a_1, \dots ,a_r, b_1m, \dots ,b_sm) + \chi(a_1, \dots ,a_r, b_1, \dots ,b_s)\sqrt{m} + \mathbb N[\sqrt{m}] \text{.}
    \end{split}
    \] } Note that the case $s=0$ is allowed in this result. Also, if the integers $a_1, \dots ,a_r, b_1m, \dots ,b_sm$ are coprime, then so are $a_1, \dots ,a_r, b_1, \dots ,b_s$, so the second part of Theorem 2.3 uses well-defined expressions.
    
    \item With $\chi(a_1, \dots, a_n)$ as above, recall that in the $n = 2$ case of the classical template $(\N, \N, \N)$ in $\Z$, $\chi(a_1, a_2) = (a_1 - 1)(a_2 - 1)$ for coprime $a_1, a_2$. In a tour de force in \cite{doyon}, Kim proves a similar formula for the template $(\mathbb N[\sqrt{m}], \mathbb N[\sqrt{m}], \mathbb N[\sqrt{m}])$ in $\mathbb Z[\sqrt{m}]$:
    {\theorem
    Suppose $\alpha=a+b\sqrt{m}, \beta=c+d\sqrt{m}$, $a, b, c, d\in \mathbb N$, $a+b, c+d>0$, $abcd=0$ 
    (see Theorem 2.1, above), and suppose that $\alpha, \beta$ span unity in $\mathbb Z[\sqrt{m}]$.
    Then $\frob{\alpha, \beta}=(\alpha-1)(\beta-1)(1+\sqrt{m})+\mathbb N[\sqrt{m}]$.}
\end{enumerate}

What's next? We propose these categories of rings where one might discover results of interest of the Frobenius type.
\begin{enumerate}
    \item \textit{Subrings of algebraic extensions of }$\Q$.
    
    Prior work on the Gaussian integers and the rings $\mathbb Z[\sqrt{m}]$ has put us into the foothills of a mountain range in this area.
    Besides the extensions of $\Q$ of finite degree, what about the ring of algebraic integers?
    Or, a better choice to start with, the ring of real algebraic integers?
    
    \item \textit{Polynomial rings.}
    
    Somebody we know is working on this, but we haven't heard from him for a year or so.
    
    \item \textit{Rings of square matrices.}
    
    Did we say our rings have to be commutative?
    No, we did not. Keep in mind that in noncommutative rings, the precise definition of\\ $MN(\alpha_1, \dots, \alpha_n)$ becomes important, since for $\alpha_1, \dots, \alpha_n$ from $A$ and $\lambda_1, \dots, \lambda_n$ from $C$, the linear combinations $\sum_{i = 1}^n \lambda_i \alpha_i$ and $\sum_{i = 1}^n \alpha_i \lambda_i$ are not necessarily equal.
\end{enumerate}

\section{Modifying the classical template}

The classical template $(\N, \N, \N)$ uses nonnegative integer coefficients. What happens when we modify the available coefficients? Consider the template $(\N, (n + \N) \cup \set{0}, \N)$ for some $n \in \Z^+$. When $n = 1$, this is the classical template.

Remember that when $a_1, \dots, a_k \in \N$ are coprime, $\cond{a_1, \dots, a_k}$ is the unique integer such that $\frob{a_1 \dots, a_k} = \cond{a_1, \dots, a_k} + \N$ with respect to the classical template. In the results that follow, $\cond{a_1, \dots, a_k}$ retains this meaning, whereas $\text{Frob}$ and $MN$ reference the modified template $(\N, (n + \N) \cup \set{0}, \N)$.

{\proposition If $a_1, \dots, a_k \in \N$ are coprime, then
\[ (a_1 + \dots + a_k)n + \cond{a_1, \dots, a_k} + \N \subseteq \frob{a_1, \dots, a_k}\text. \]}

\begin{proof}
Suppose $\omega \in \N$ and $\omega \geq (a_1 + \dots + a_k)n + \cond{a_1, \dots, a_k}$. We will show $\omega \in \frob{a_1, \dots, a_k}$, i.e. 
\[ \omega + \N \subseteq MN(a_1, \dots, a_k) = \sett{ \sum_{i=1}^k \lambda_i a_i : \lambda_1, \dots, \lambda_k \in (n + \N) \cup \set{0} }\text. \]
Suppose $f \in \N$, $f \geq \omega$. It suffices to show that $f \in MN(a_1, \dots, a_k)$. Since $f \geq \omega$, we have that $f - (a_1 + \dots + a_k)n \geq \cond{a_1, \dots, a_k}$, so there exist coefficients $\gamma_1, \dots, \gamma_k \in \N$ such that $f = (a_1 + \dots + a_k)n + \sum_{i=1}^k \gamma_i a_i = \sum_{i=1}^k (\gamma_i + n)a_i$. Taking $\lambda_i = \gamma_i + n \geq n$ for $i =1, \dots, k$ yields the desired result.
\end{proof}

While reading the next proof, remember that $\cond{a, b} = (a - 1)(b - 1)$ for coprime $a, b \in \N$, so $\cond{a, b} - 1 = ab - a - b$.

{\proposition Let $a, b \in \N$ and $n \in \Z^+$. If $a, b$ are coprime and $n - 1$ is not divisible by $a$ or $b$, then $\frob{a, b} = (a + b)n + \cond{a, b} + \N$. }

\begin{proof}
By the preceding proposition, it suffices to show that $\frob{a, b} \subseteq (a + b)n + \cond{a, b} + \N$. So let $t \in \N \setminus \paren{(a + b)n + \cond{a, b} + \N}$, and suppose that $t \in \frob{a, b}$. We will find a contradiction, which will complete the proof.

Since $t \in \frob{a, b}$, we have $t + \N \subseteq MN(a, b)$, so $an + bn + \cond{a, b} - 1 \geq t$ is an element of $MN(a, b)$, so there exist $\lambda_1, \lambda_2 \in (n + \N) \cup \set{0}$ such that
\[ \cond{a, b} - 1 = \lambda_1 a + \lambda_2 b - an - bn = (\lambda_1 - n) a + (\lambda_2 - n) b \text. \]
Now, if both $\lambda_1$ and $\lambda_2$ are $\geq n$, then $\cond{a, b} - 1$ equals a linear combination of $a$ and $b$ with nonnegative integer coefficients, which is impossible. We also cannot have $\lambda_1 = 0 = \lambda_2$ since $\cond{a, b} + an + bn > 1$. So consider the case that $\lambda_1 = 0$ and $\lambda_2 \geq n$. In that case, $(\lambda_2 - n)b = \cond{a, b} - 1 + an = ab - b + a(n - 1)$, so $(\lambda_2 - n - a + 1)b = a(n - 1)$, so $b$ divides $a(n - 1)$. But $a$ and $b$ are coprime, so this implies that $b$ divides $n - 1$, which we have assumed to be false. Similarly $\lambda_1 \geq n$ and $\lambda_2 = 0$ would imply that $(\lambda_1 - n)a = \cond{a, b} - 1 + bn = ab - a + b(n - 1)$, yielding the contradiction that $a$ divides $n - 1$. Thus, all cases yield a contradiction, so our supposition that $t \in \frob{a, b}$ must be false.
\end{proof}

\section{\texorpdfstring{$\mathbf{2 \times 2}$}{Lg} triangular matrices with constant diagonal}

For a ring $Q$ with multiplicative identity $1$, let $R$ be the set $Q^2 = Q \times Q$ under coordinatewise addition, with multiplication in $R$ defined by $(a, b) \cdot (c, d) = (ac, ad + bc)$. $R$ is a ring with multiplicative identity $(1, 0)$, isomorphic to the ring of upper triangular $2 \times 2$ matrices over $Q$ with constant diagonal: $(a, b) \sim \begin{bmatrix} a & b \\ 0 & a \end{bmatrix}$. If $Q$ is commutative, then $R$ is commutative.

We will concentrate on the cases when $Q$ is a subring of the real field $\R$ containing $1$, and the Frobenius template is 
\[ 
\begin{split}
    &\paren{ A(Q), C(Q), U(Q) } \\
    &= \paren{ \paren{Q \cap (0, \infty)} \times \paren{Q \cap [0, \infty)}, \paren{Q \cap [0, \infty)}^2, \paren{Q \cap [0, \infty)}^2}.
\end{split} \]

{\proposition Let $Q$ be a subring of $\R$ containing $1$. For any positive integer $n$, let $(\alpha_1, \dots, \alpha_n)$ be a list in $A(Q) \setminus \paren{ Q \times \set{ 0 } }$. Then $\frob{\alpha_1, \dots, \alpha_n} = \emptyset$.}

\begin{proof}
Let $\alpha_i = (a_i, b_i)$, $i = 1, \dots, n$. Elements of $MN(\alpha_1, \dots, \alpha_n)$ look like
\[
\begin{split}
    (t, u) 
    &= \sum_{i=1}^n (a_i,b_i) \cdot (c_i,d_i) \\
    &= \sum_{i=1}^n (a_ic_i, a_id_i + b_ic_i) \\
    &=  \paren{\sum_{i=1}^n a_ic_i, \sum_{i=1}^n \paren{ a_id_i + b_ic_i} },
\end{split}
\]
where $c_i \geq 0$ and $d_i \geq 0$ for all $i = 1, \dots, n$.

Suppose that $\frob{\alpha_1, \dots, \alpha_n}$ is nonempty, so there is a tuple $(t, u) \in Q^2$ such that $(t, u) + (Q \cap [0, \infty))^2 \subseteq MN(\alpha_1, \dots, \alpha_n)$. Therefore, for all $q \geq 0 $, we have $(t, u) + (q, 0) = (t + q, u) \in MN(\alpha_1, \dots, \alpha_n)$. So for larger and larger $q$, $t + q = \sum_{i=1}^n a_i c_i(q)$ and $u = \sum_{i = 1}^n ( a_i d_i(q) + b_i c_i(q) )$ for some nonnegative integers $c_i(q)$ and $d_i(q)$ ($i = 1, \dots, n$) that vary with $q$. Clearly $ \sum_{i = 1}^n a_i c_i(q) = t + q \to \infty$ as $q \to \infty$. Because each fixed $a_i, b_i > 0$, and each varying $c_i(q), d_i(q) \geq 0$, it follows that $\max_{1 \leq i \leq n} c_i(q) \to \infty$ as $q \to \infty$, and consequently $u = \sum_{i = 1}^n (a_i d_i(q) + b_i c_i(q)) \to \infty$ as $q \to \infty$. But $u$ is a given constant; it does not vary with $q$. Therefore, our supposition must be false, so $\frob{\alpha_1, \dots, \alpha_n} = \emptyset$.
\end{proof}

Moving on, we now know a sufficient condition for $\frob{\alpha_1, \dots, \alpha_n} = \emptyset$ in a large swath of templates. The natural question is whether $\frob{\alpha_1, \dots, \alpha_n}$ is ever nonempty, and the answer is yes.

{\proposition Let $Q$ be a subfield of  $\R$. For any positive integer $n$, let\\ $(\alpha_1, \dots, \alpha_n)$ be a sequence of $2$-tuples $\alpha_i = (a_i, b_i) \in A(Q)$ (for $i = 1, \dots, n$) satisfying $b_1 = 0$. Then $\frob{\alpha_1, \dots, \alpha_n} = MN(\alpha_1,\dots, \alpha_n) = U(Q)$.}

\begin{proof}
We have 
\[ MN(\alpha_1, \dots, \alpha_n) = \set{ \paren{ \sum_{i = 1}^n a_i c_i, \sum_{i = 1}^n a_i d_i + \sum_{i = 2}^n b_i c_i } : c_i, d_i \in Q \cap [0, \infty) } \text. \] 
It will suffice to show that $(0, 0) \in \frob{\alpha_1, \dots, \alpha_n}$. Let $f, g \geq 0$, $f, g \in Q$, so that $f, g \in (0, 0) + U(Q)$. Then use the coefficients $c_1 = \frac{f}{a_1}$, $d_1 = \frac{g}{a_1}$, and $c_j = d_j = 0$ for $j = 2, \dots, n$ to show that $(f, g) \in MN(\alpha_1, \dots, \alpha_n)$, so $(0, 0) \in \frob{\alpha_1, \dots, \alpha_n}$.
\end{proof}

Proposition 4.1 can be generalized, \ti{mutatis mutandis,} to $Q$ being any linearly ordered commutative ring with unity. The same holds for Proposition 4.2, with the additional stipulation that $a_1$ is a unit. In both cases, the ordering is compatible with addition and multiplication.

In other words, setting $Q$ as a subfield of $\R$ yields a boring Frobenius template, since we can shrink elements of $\R$ using coefficients in $c_i, d_i \in Q \cap (0, 1)$. If we prohibit such shrinking, the template becomes much more interesting. Notice that this modification is similar to Section 3's modification of the classical template.

To make the following result fit on the page, we will temporarily adopt the convention that for a tuple $(a, b)$, the notation $(a, b)_+$ stands for $(a, b) + [0, \infty)^2$.

{\proposition Let $Q = \R$, and change $C(\R)$ to be $C = \set{(0, 0)} \cup [1, \infty)^2$. Let $\alpha_1 = (a_1, 0), \alpha_2 = (a_2, b_2)$ form a list $(\alpha_1, \alpha_2)$ of tuples in $A$. If $b_2 = 0$, then $\frob{\alpha_1, \alpha_2} = (\min(a_1, a_2), \min(a_1, a_2))_+$. If $b_2 > 0$, then 
\[ 
\begin{split}
&\frob{\alpha_1, \alpha_2} 
= \\
&\begin{cases}
(a_1, a_1)_+ & a_1 \leq a_2 \text{ and } a_1 \leq b_2 \\
\paren{a_1, a_1}_+ \cup \paren{\frac{a_1 a_2}{b_2} + a_1, b_2}_+ & b_2 < a_1 \leq a_2 \\
\paren{a_1, a_2}_+ \cup \paren{ a_2, \frac{a_1 b_2}{a_2} + a_2}_+ & a_1 > a_2 \text{ and } a_2 \leq b_2 \\
\paren{a_1, a_2}_+ \cup \paren{ a_2, \frac{a_1 b_2}{a_2} + a_2}_+ \cup \paren{ \frac{(a_2)^2}{b_2} + a_1, b_2}_+ & b_2 < a_2 < a_1
\end{cases}
\end{split}
\] }

Simple modifications to the proof of Proposition 4.1 will prove that Proposition 4.1 holds in this template. Since $\frob{\alpha_1, \alpha_2} \subseteq \frob{\alpha_1, \dots, \alpha_n}$, Proposition 4.3 combines with Proposition 4.1 to prove that if $\alpha_i = (a_i, b_i)  \in (0, \infty) \times [0, \infty)$ for $i = 1, \dots, n$ and $n > 1$, then $\frob{\alpha_1, \dots, \alpha_n}$ is nonempty if and only if some $b_i = 0$.

\begin{proof}
\tb{Case 0}: Suppose $b_2 = 0$, so elements of $MN(\alpha_1, \alpha_2)$ look like $(a_1 c_1 + a_2 c_2, a_1 d_1 + a_2 d_2)$ for $c_1, c_2, d_1, d_2 \in [0, \infty) \setminus (0, 1)$. Notice that $f \in (0, \min(a_1,$\\$ a_2))$ or $g \in (0, \min(a_1, a_2))$ implies $(f, g) \notin MN(\alpha_1, \alpha_2)$. Therefore, if $(t, u) \in \frob{\alpha_1, \alpha_2}$, then $t, u \geq \min(a_1, a_2)$. The converse is trivial, so this case is done.

The set $MN(\alpha_1, \alpha_2)$ is 
\[ \sett{ \paren{a_1 c_1 + a_2 c_2, a_1 d_1 + a_2 d_2 + b_2 c_2} : c_1, c_2, d_1, d_2 \in [0, \infty) \setminus (0, 1) } \text{.} \]
The trickiness of the remaining cases is that both coordinates share the coefficient $c_2$. Similar to the last paragraph, notice that $f \in (0, \min(a_1, a_2))$ or $g \in (0, \min(a_1, a_2, b_2))$ implies $(f, g) \notin MN(\alpha_1, \alpha_2)$; therefore, $(t, u) \in \frob{\alpha_1, \alpha_2}$ implies $t \geq \min(a_1, a_2)$ and $u \geq \min(a_1, a_2, b_2)$.

\tb{Case 1}: $a_1 \leq a_2$ and $a_1 \leq b_2$. By remarks above, in this case $\frob{\alpha_1, \alpha_2} \subseteq (a_1, a_1) \cup [0, \infty)^2$. On the other hand, if $f, g \geq a_1$, then $(f, g) = (\frac f {a_1}, \frac g {a_1}) \alpha_1 + (0, 0) \alpha_2$, so $(f, g) \in MN(\alpha_1, \alpha_2)$. Therefore, $(a_1, a_1) + [0, \infty)^2 \subseteq \frob{\alpha_1, \alpha_2}$, so $\frob{\alpha_1, \alpha_2} = (a_1, a_1) + [0, \infty)^2$.

\tb{Case 2}: $0 < b_2 < a_1 \leq a_2$. Then $\frob{\alpha_1, \alpha_2} \subseteq (a_1, b_2) + [0, \infty)^2$ and, by the proof in Case 1,
\[ (a_1, a_1) + [0, \infty)^2 = [a_1, \infty)^2 \subseteq \frob{\alpha_1, \alpha_2} \text. \]
Therefore, to determine $\frob{\alpha_1, \alpha_2}$ in this case, it is sufficient to determine which $(t, u) \in [a_1, \infty) \times [b_2, a_1)$ are in $\frob{\alpha_1, \alpha_2}$.

If $(t, u) \in \frob{\alpha_1, \alpha_2} \subseteq MN(\alpha_1, \alpha_2)$, then for some $c_1, c_2, d_1, d_2 \in \set{0} \cup [1, \infty)$, $t = a_1 c_1 + a_2 c_2$ and $u = a_1 d_1 + a_2 d_2 + c_2 b_2$. Then $b_2 \leq u < a_1 \leq a_2$ implies that $d_1 = d_2 = 0$ and $c_2 = \frac u {b_2}$. Then we have that $t = a_1 c_1 + a_2 \frac u { b_2}$. If $t < a_1 + a_2 \frac u {b_2}$, then $c_1 = 0$ and $t = a_2 \frac u {b_2}$. But this would imply that for any $t'$ such that $t < t' < a_1 + a_2 \frac u {b_2}$, $(t', u) \notin MN(\alpha_1, \alpha_2)$, and this would contradict the assumption that $(t, u) \in \frob{\alpha_1, \alpha_2}$.

Therefore, if $(t, u) \in \frob{\alpha_1, \alpha_2}$ and $b_2 \leq u < a_1$, then $t \geq a_1 + a_2 \frac u {b_2}$. But $(t, u) \in \frob{\alpha_1, \alpha_2}$ implies that $(t, u') \in \frob{\alpha_1, \alpha_2}$ for every $u' \geq u$. Therefore, $t \geq a_1 + a_2 \frac {u'} {b_2}$ for every $u'$ satisfying $u \leq u' < a_1$. Therefore, $t \geq a_1 + \frac{a_1 a_2}{b_2}$.

Thus \begin{align*}
    (a_1, a_1) + [0, \infty)^2 &\subseteq \frob{\alpha_1, \alpha_2} \\
    &\subseteq \paren{(a_1, a_1) + [0, \infty)^2} \cup \paren{ \bigg[\frac{a_1 a_2}{b_2} + a_1, \infty \bigg) \times [b_2, a_1)} \\
    &\subseteq \paren{(a_1, a_1) + [0, \infty)^2} \cup \paren{\paren{\frac{a_1 a_2}{b_2} + a_1, b_2} + [0, \infty)^2 } \text,
\end{align*}
so the proof in this case will be over if we show that $\paren{\frac {a_1 a_2}{b_2} + a_1, b_2}$ is in $\frob{\alpha_1, \alpha_2}$. Suppose that $f \geq \frac {a_1 a_2}{b_2} + a_1$ and $g \geq b_2$. We will see that $(f, g) \in MN(\alpha_1, \alpha_2)$. We may as well assume that $g < a_1$. Then $(f, g) = \paren{\frac 1 {a_1} \paren{f - \frac{g a_2}{b_2}}, 0} \alpha_1 + \paren{ \frac g {b_2}, 0} \alpha_2$, so $(f, g) \in MN(\alpha_1, \alpha_2)$ because $\frac g {b_2} \geq 1$ and $\frac 1 {a_1}\paren{f - \frac{g a_2}{b_2}} \geq \frac{1}{a_1} \paren{\frac{a_1 a_2}{b_2} + a_1 - \frac{a_1 a_2}{b_2} } = 1$.

\tb{Case 3}: $a_2 < a_1$ and $a_2 \leq b_2$. By arguments on display above, $(a_1, a_1) + [0, \infty)^2 \subseteq \frob{\alpha_1, \alpha_2} \subseteq (a_2, a_2) + [0, \infty)^2$. But it is also easy to see that $(a_1, a_2) \in \frob{\alpha_1, \alpha_2}$ ($\implies (a_1, a_2) + [0, \infty)^2 \in \frob{\alpha_1, \alpha_2}$): if $f \geq a_1$ and $g \geq a_2$, then $(f, g) = \paren{\frac f {a_1}, 0} \alpha_1 + \paren{0, \frac g {a_2}} \alpha_2 \in MN(\alpha_1, \alpha_2)$.

It remains to determine $\frob{\alpha_1, \alpha_2} \cap \paren{[a_2, a_1) \times [a_2, \infty) }$. Suppose that $(t, u) \in \frob{\alpha_1, \alpha_2}$ and $a_2 \leq t < a_1$. Let $c_1, c_2, d_1, d_2 \in \set{0} \cup [1, \infty)$ satisfy $t = a_1 c_1 + a_2 c_2$, $u = a_1 d_1 + a_2 d_2 + c_2 b_2$. Then $a_2 \leq t < a_1$ implies that $c_1 = 0$ and $c_2 = \frac t {a_2}$. If $u < a_2 + \frac{t b_2}{a_2}$ then $d_1 = d_2 = 0$ and $u = \frac{tb_2}{a_2}$. But then if $u < u' < a_2 + \frac {t b_2}{a_2}$, $(t, u') \notin MN(\alpha_1, \alpha_2)$, contradicting the assumption that $(t, u) \in \frob{\alpha_1, \alpha_2}$.

Therefore, $u \geq a_2 + \frac{tb_2}{a_2}$. But $(t, u) \in \frob{\alpha_1, \alpha_2}$ implies that $(t', u) \in \frob{\alpha_1, \alpha_2}$ for all $t'$ such that $t < t' < a_1$. Therefore, $u \geq a_2 + \frac{t' b_2}{a_2}$ for all such $t'$. Therefore, $u \geq a_2 + \frac{a_1 b_2}{a_2}$.

To finish the proof in this case, it will suffice to show that $[a_2, a_1) \times [a_2 + \frac{a_1 b_2}{a_2}, \infty) \subseteq \frob{\alpha_1, \alpha_2}$, and for that it will suffice to show that if $a_2 \leq f < a_1$ and $a_2 + \frac{a_1 b_2}{a_2} \leq g$ then $(f, g) \in MN(\alpha_1, \alpha_2)$. For such $(f, g)$, $(f, g) = \paren{\frac f {a_2}, \frac 1 {a_2} \paren{g - \frac {f b_2}{a_2}}} \alpha_2$, so $(f, g) \in MN(\alpha_1, \alpha_2)$ since $\frac f {a_2} \geq 1$ and $\frac 1{a_2}\paren{g - \frac {f b_2}{a_2}} \geq \frac 1 {a_2} \paren{a_2 + \frac{a_1 b_2}{a_2} - \frac{a_1 b_2}{a_2}} = 1$.

\tb{Case 4}: $0 < b_2 < a_2 < a_1$. Clearly $\frob{\alpha_1, \alpha_2} \subseteq (a_2, b_2) + [0, \infty)^2$. If $a_1 \leq f$ and $a_2 \leq g$ then $(f, g) = \paren{\frac f{a_1}, 0} \alpha_1 + \paren{0, \frac g{a_2}} \alpha_2$, so $(f, g) \in MN(\alpha_1, \alpha_2)$; therefore, $(a_1, a_2) + [0, \infty)^2 \subseteq \frob{\alpha_1, \alpha_2}$.

Next, we shall show that
\[ \frob{\alpha_1, \alpha_2} \cap \paren{[a_2, a_1) \times [b_2, \infty)} = [a_2, a_1) \times \bigg[a_2 + \frac{a_1 b_2}{a_2}, \infty\bigg) \text, \]
by an argument that will be familiar to anyone who has read the proofs in cases 2 and 3.

Suppose that $(t, u) \in \frob{\alpha_1, \alpha_2}$ and $a_2 \leq t < a_1$. Then for some $c_2, d_1, d_2 \in \set{0} \cup [1, \infty)$, $t = c_2 a_2$ and $u = a_1 d_1 + a_2 d_2 + \frac t{a_2} b_2$. If $u < a_2 + \frac{tb_2}{a_2}$, then $d_1 = d_2 = 0$ and $u = \frac{tb_2}{a_2}$. But then for all $u' \in \paren{u, \frac{t b_2}{a_2}}$, $(t, u') \notin MN(\alpha_1, \alpha_2)$, which contradicts the assumption that $(t, u) \in \frob{\alpha_1, \alpha_2}$.

Therefore, $u \geq a_2 + \frac{tb_2}{a_2}$. But then the fact that $(t', u) \in \frob{\alpha_1, \alpha_2}$ for all $t'$ satisfying $t < t' < a_1$ implies that $u \geq a_2 + \frac{t' b_2}{a_2}$ for all such $t'$. Therefore $u \geq a_2 + \frac{a_1 b_2}{a_2}$, which shows that
\[ \frob{\alpha_1, \alpha_2} \cap \paren{[a_2, a_1) \times [b_2, \infty) } \subseteq [a_2, a_1) \times \bigg[a_2 + \frac{a_1 b_2}{a_2}, \infty\bigg) \text. \]

On the other hand, if $a_2 \leq f < a_1$ and $g \geq a_2 + \frac{a_1b_2}{a_2}$, then $(f, g) = \paren{\frac{f}{a_2}, \frac{1}{a_2}\paren{g - \frac{fb_2}{a_2}}} \alpha_2$. 
From $\frac{f}{a_2} \geq 0$ and 
\[\frac{1}{a_2} \paren{g - \frac{f b_2}{a_2}} 
\geq \frac 1 {a_2}\paren{a_2 + \frac{a_1 b_2}{a_2} - \frac{a_1 b_2}{a_2}} = 1,\] 
we conclude that $(f, g) \in MN(\alpha_1, \alpha_2)$. Thus \[\frob{\alpha_1, \alpha_2} \cap \paren{[a_2, a_1) \times [b_2, \infty) } = [a_2, a_1) \times \bigg[a_2 + \frac{a_1 b_2}{a_2}, \infty\bigg).\]

To finish Case 4, we will determine $\frob{\alpha_1, \alpha_2} \cap \paren{[a_1, \infty) \times [b_2, a_2)}$. Suppose that $(t, u) \in \frob{\alpha_1, \alpha_2}$, with $t \geq a_1$ and $b_2 \leq u < a_2$. Then for some $c_1, c_2 \in \set{0} \cup [1, \infty)$, $t = a_1 c_1 + a_2 c_2$ and $u = b_2 c_2$. Then $t = a_1 c_1 + \frac{a_2 u}{b_2}$. If $t < a_1 + \frac{u a_2}{b_2}$, then $c_1 = 0$ and $t = \frac{u a_2}{b_2}$.

Then for all $t'$ such that $t < t' < a_1 + \frac{u a_2}{b_2}$, $(t', u) \notin MN(\alpha_1, \alpha_2)$, which contradicts the assumption that $(t, u) \in \frob{\alpha_1, \alpha_2}$. Therefore, $t \geq a_1 + \frac{u a_2}{ b_2}$. But then $(t, u) \in \frob{\alpha_1, \alpha_2}$ implies that $(t, u') \in \frob{\alpha_1, \alpha_2}$ for all $u'$ satisfying $u < u' < a_2$. Therefore, $t \geq a_1 + \frac{u' a_2}{b_2}$ for each such $u'$. Therefore, $t \geq a_1 + \frac{a_2^2}{b_2}$. Thus $\frob{\alpha_1, \alpha_2} \cap \paren{[a_1, \infty) \times [b_2, a_2)} \subseteq [a_1 + \frac{a_2^2}{b_2}, \infty) \times [b_2, a_2)$. On the other hand, suppose that $a_1 + \frac{a_2^2}{b_2} \leq f$ and $b_2 \leq g < a_2$. Then $(f, g) = \paren{\frac{1}{a_1}\paren{f - \frac{g a_2}{b_2}}, 0} \alpha_1 + \paren{\frac{g}{b_2}, 0} \alpha_2$. Thus, since $\frac{g}{b_2} \geq 1$ and $\frac{1}{a_1} \paren{f - \frac{g a_2}{b_2}} \geq \frac{1}{a_1} \paren{a_1 + \frac{a_2^2}{b_2} - \frac{a_2^2}{b_2}} = 1$, $(f, g) \in MN(\alpha_1, \alpha_2)$. Thus $\frob{\alpha_1, \alpha_2} \cap [a_1, \infty) \times [b_2, a_2) = [a_1 + \frac{a_2^2}{b_2}, \infty) \times [b_2, a_2)$. This, together with previous results, proves the claim in Case 4.
\end{proof}

We now shift focus to $Q = \Z$. Recall that $\cond{a_1, \dots, a_n}$ is well-defined for coprime positive integers $a_1, \dots, a_n$, with respect to the classical Frobenius template $(\N, \N, \N)$.

{\proposition For $n\geq 2$, let $(\alpha_1, \dots, \alpha_n)$ be a list of $2$-tuples $\alpha_i = (a_i,b_i) \in \Z^+ \times \N$ (for $i = 1, \dots, n$) satisfying $\gcd(a_1, \dots, a_n) = 1$ and $b_1 = 0$. Then \[ \paren{ \cond{a_1, \dots, a_n}, \cond{a_1, \dots, a_n} +  (a_1 - 1) \sum_{i = 2}^n b_i } + \N^2 \subseteq \frob{\alpha_1, \dots, \alpha_n}.\]}

\begin{proof}
Let $t = \cond{a_1, \dots, a_n}$ and $u = \cond{a_1, \dots, a_n} +  (a_1 - 1) (\sum_{j = 2}^n b_j)$. It's sufficient to show that $(t, u) \in \frob{\alpha_1, \dots, \alpha_n}$, i.e.\\ $(t, u) + \N^2 \subseteq MN(\alpha_1, \dots, \alpha_n)$. Pick an arbitrary $(f,g) \in (t,u) + \N^2$, i.e. choose integers $f \geq \cond{a_1, \dots, a_n}$ and $g \geq \cond{a_1, \dots, a_n} + \sum_{j = 2}^n b_j(a_1 - 1)$. To complete the proof, we will show that 
\[ (f,g) \in MN(\alpha_1, \dots, \alpha_n) = \sett{ \paren{\sum_{i=1}^n a_i c_i, \sum_{i=1}^n a_i d_i + \sum_{i=2}^n b_i c_i } : c_i, d_i \in \N}. \]
Since $f \geq \cond{a_1, \dots, a_n}$, we can write $f = \sum_{i = 1}^n a_i c_i$ for some nonnegative integers $c_1, \dots, c_n$. For each $i \in \set{2, \dots, n}$, use Euclidean division to write $c_i = a_1 q_i + \tilde{c}_i$ for some new nonnegative integers $q_i$ and $\tilde{c}_i \leq (a_1 - 1)$, then set $\tilde{c}_1 = c_1 + \sum_{i=2}^n a_i q_i$ so that $\sum_{i = 1}^n a_i c_i = f = \sum_{i = 1}^n a_i \tilde{c}_i$. Can we find nonnegative $d_1, \dots, d_n$ such that $g = \sum_{i=1}^n a_i d_i + \sum_{i=2}^n b_i \tilde{c}_i$? Well, $g - \sum_{i=2}^n b_i \tilde{c}_i \geq g - \sum_{i=2}^n b_i (a_1 - 1) \geq \cond{a_1, \dots, a_n}$, so yes. Therefore, $(f,g) \in MN(\alpha_1, \dots, \alpha_n)$.
\end{proof}

{\corollary For $n\geq 2$, let $(\alpha_1, \dots, \alpha_n)$ be a list of $2$-tuples $\alpha_i = (a_i,b_i) \in \Z^+ \times \N$ (for $i = 1, \dots, n$) satisfying $\gcd(a_1, \dots, a_n) = 1$. $\frob{\alpha_1, \dots, \alpha_n}$ is nonempty if and only if at least one $b_i = 0$.}

\begin{proof}
Combine propositions 4.1 and 4.4.
\end{proof}

We have seen results like this before, such as Theorem 2.1 of Section 2 and the paragraph preceding the proof of Proposition 4.3. Combining this corollary with the next result completely solves the Frobenius problem in the case $n = 2$ and $Q = \Z$.

{\proposition Suppose that $a_1, a_2 \in \Z^+, b \in \N$, $a_1$ and $a_2$ are coprime, and $\alpha_1 = (a_1, 0), \alpha_2 = (a_2, b)$. Then \[\frob{\alpha_1, \alpha_2} = \paren{ \cond{a_1, a_2}, \cond{a_1, a_2} + b(a_1 - 1) } + \N^2.\]}

\vspace{-20pt}
\begin{proof}
Let $(t, u) \in \N \times \N$. By Proposition 4.4, $t \geq \cond{a_1, a_2}$ and $u \geq \cond{a_1, a_2} + b( a_1 - 1)$ implies that $(t, u) \in \frob{\alpha_1, \alpha_2}$. For the converse, we will prove the contrapositive. Suppose that $t < \cond{a_1, a_2}$. Then $\cond{a_1, a_2} - 1 \geq t$ is not in $\set{a_1 c_1  + a_2 c_2 : c_1, c_2 \in \N}$, so $(\cond{a_1, a_2} - 1, u)$ is not in $\{ ( a_1 c_1 + a_2 c_2, a_1 d_1 + a_2 d_2 + b c_2) : c_1, c_2, d_1, d_2 \in \N \} = MN(\alpha_1, \alpha_2)$, yet $(\cond{a_1, a_2} - 1,u) \in (t,u) + \N^2$. Hence $(t,u) \notin \frob{\alpha_1, \alpha_2}$.

Now assume that $u < \cond{a_1, a_2} + b(a_1 - 1)$. Set $f = a_1 c_1 + a_2 (a_1 - 1)$ for some nonnegative $c_1$ large enough such that $f \geq t$. Consider any alternative expression of $f$ as $f = a_1 \tilde{c}_1 + a_2 \tilde{c}_2$ using nonnegative coefficients $\tilde{c}_1, \tilde{c}_2$. Recall that $f = a_1 c_1 + a_2 (a_1 - 1)$, so $a_1(c_1 - \tilde{c}_1) = a_2(\tilde{c}_2 - (a_1 - 1))$. Then $a_1 | (\tilde{c}_2 - (a_1 - 1))$ since $a_1$ and $a_2$ are coprime, so $\tilde{c}_2 \equiv a_1 - 1 \pmod{a_1}$. And $\tilde{c}_2$ is nonnegative, so $\tilde{c}_2 \geq (a_1 - 1)$, so $\tilde{c}_2 = (a_1 - 1) + a_1 k$ for some nonnegative integer $k$. Set $g = \cond{a_1, a_2} + b(a_1 - 1) - 1$ so that $g \geq u$; hence $(f,g) \in (t,u) + \N^2$. 
Suppose that $(f,g) \in MN(\alpha_1, \alpha_2)$; there are nonnegative coefficients $d_1$ and $d_2$ such that $g = a_1d_1 + a_2d_2 + b \tilde{c}_2 = a_1 d_1 + a_2 d_2 + b ((a_1 - 1) + a_1 k)$.  Consequently, $g - b(a_1 - 1) = a_1(d_1 + bk) + a_2d_2$ for nonnegative integers $(d_1 + b k)$ and $d_2$, so $g - b(a_1 - 1) \in MN(a_1, a_2)$. But this is impossible, because $g - b(a_1 - 1) = \cond{a_1,a_2} - 1 \notin MN(a_1, a_2)$, so our supposition must be false; $(f, g) \notin MN(\alpha_1, \alpha_2)$. Therefore, $(t, u) \notin \frob{\alpha_1, \alpha_2}$.
\end{proof}

Proposition 4.5 shows that when $n = 2$, the set inclusion in the conclusion of Proposition 4.4 is an equality. However, for lists of length $> 2$, there are counterexamples to the reverse set inclusion of Proposition 4.4, so Proposition 4.5 does not generalize to longer lists of tuples. The following is a counterexample: Let $\alpha_1=(3,0)$, $\alpha_2=(5,2)$, and $\alpha_3=(7,4)$. Then it can be shown that $(5,16) \in \frob{\alpha_1, \alpha_2, \alpha_3}+\N^2$, but $(5,16) \notin \{\cond{a_1, \dots, a_n} +  (a_1 - 1) \paren{\sum_{i = 2}^n b_i}\} + \N^2$. In fact, it can be shown that $\frob{\alpha_1, \alpha_2, \alpha_3}= (5,9) + \N^2$. Furthermore, for $\beta_1 = (3, 0)$, $\beta_2 = (5, 1)$, and $\beta_3 = (7, 4)$ we have
\[ \frob{\beta_1, \beta_2, \beta_3} = \paren{(5, 9) + \N^2} \cup \paren{(8, 7) + \N^2}  \text, \]
so the Frobenius set might even be a union of two sets, each of the form $(a, b) + \N^2$, neither contained in the other.
This situation resembles that of the classical template, in the sense that the Frobenius problem is completely solved for lists of length $2$, but not for lists of length $> 2$. 

\section{A more general template}

Here we broaden our horizons and pass from the templates $(A, C, U)$ where $A$ and $C$ are thought of as subsets of the same overlying ring, to templates $(A', C', U')$ where $A'$ is a monoid and $C'$ is a set of functions acting on $A'$. The first kind of template, i.e. the only kind hitherto discussed in this paper, can be considered a special case of the second; furthermore, templates of the second kind cannot in general be interpreted as examples of the first kind. The different entries in the new kind of template have the same roles as the corresponding entries in the original kind of template.

For the sake of presentation, we will showcase a certain example of this new kind of template and leave the precise definitions to the reader. Let $A = U = \nat \times \nat \times \nat,$ and let $C$ be the set of upper triangular matrices in $M_3(\nat)$.
In the ring $\zz^3$ with coordinate addition and multiplication, $A = U = \nat^3$ is a monoid. However, $C$ is not contained in $\zz^3,$ so this template is different from those considered previously. Note that $C$ contains an isomorphic copy of $\nat^3$ via the semiring embedding given by
$$
\begin{bmatrix} 
    x \\
    y \\
    z \\
\end{bmatrix} 
\mapsto
\begin{bmatrix} 
    x & 0 & 0 \\
    0 & y & 0 \\
    0 & 0 & z \\
\end{bmatrix}.
$$

Consider a general pair of tuples $(a, b, c)^T$ and $(d, e, f)^T \in A$. A general member of 
$$
    MN\left(\begin{bmatrix}
    a\\ b\\ c\\
    \end{bmatrix},
    \begin{bmatrix}
    d\\e\\f\\
    \end{bmatrix}\right)
$$
has the form
$$
    \begin{bmatrix}
    u & v & w \\ 
    0 & x & y \\
    0 & 0 & z \\ 
    \end{bmatrix}
    \begin{bmatrix} 
    a\\ b\\ c\\
    \end{bmatrix} 
    +
    \begin{bmatrix}
    u' & v' & w' \\
    0 & x' & y' \\
    0 & 0 & z' \\ 
    \end{bmatrix}
    \begin{bmatrix}
    d \\ e \\ f \\ 
    \end{bmatrix} \text,
$$
with all entries coming from $\N$. As expected, 
$\frob{(a, b, c)^T, (d, e, f)^T}$ is defined to be
\[
    \sett{\mb{w}\in MN\left(
    \begin{bmatrix}
    a\\ b\\ c\\
    \end{bmatrix},
    \begin{bmatrix}
    d\\e\\f\\
    \end{bmatrix}\right):\mb{w}+\N^3
    \stin MN\left(
    \begin{bmatrix}
    a\\ b\\ c\\
    \end{bmatrix},
    \begin{bmatrix}
    d\\e\\f\\
    \end{bmatrix}\right)} \text.
\]
Hence, by our understanding of the classical Frobenius problem, we see that 
$$
    \frob{\begin{bmatrix}
    a \\ b\\ c\\
    \end{bmatrix},
    \begin{bmatrix}
    d \\ e\\ f\\
    \end{bmatrix}}
    =
    \begin{bmatrix}
    \chi(a,b,c,d,e,f)\\
    \chi(b,c,e,f) \\ 
    \chi(c,f) \\
    \end{bmatrix} + \nat^3,
$$
when nonempty, which is true if and only if $\gcd(c,f) = 1$.

From the case $k = 2$ it is straightforward to see what $\frob{\alpha_1,...,\alpha_k}$ is for arbitrary $k \in \zz^+$ and $\alpha_1,...,\alpha_k \in A$. It is not difficult to generalize these results to $m \times 1$ column vectors and $m \times m$ matrices for $m > 3$. Therefore these cases are no longer interesting, except for the connection between them and the classical Frobenius problem.
We can get more challenging problems by restricting the matrices in $C$. For instance, we could require the matrices to be symmetric, or upper triangular with constant diagonal. 

These examples point to a generalized Frobenius template $(A', C', U')$ in which $A'$ (or perhaps $A' \cup \set{0}$) and $U'$ are monoids in a ring $R$, and $C'$ is a monoid in the ring of endomorphisms of $(R,+)$ such that each $\varphi \in C$ maps $A$ into $U.$ 

\medskip

\bibliographystyle{plain}
\bibliography{references}

\end{document}